\documentclass[journal]{IEEEtran}

\usepackage[T1]{fontenc}
\usepackage{amsmath,amssymb,amsthm}
\usepackage{cite}
\usepackage{graphicx}
\usepackage[hidelinks]{hyperref}

\hypersetup{
    pdftitle={A Globally Asymptotically Stable Planar Homogeneous Polynomial Vector Field With No Polynomial Lyapunov Function},
    pdfauthor={Jun Liu and Maxwell Fitzsimmons}
}

\newtheorem{theorem}{Theorem}[section]

\newtheorem{lemma}[theorem]{Lemma}

\theoremstyle{definition}
\newtheorem{definition}[theorem]{Definition}
\theoremstyle{remark}
\newtheorem{remark}[theorem]{Remark}

\newcommand{\R}{\mathbb{R}}
\newcommand{\Lie}{\mathcal{L}_f}
\newcommand{\dd}{\mathop{}\!\mathrm{d}}
\newcommand{\e}{\mathrm{e}}
\newcommand{\norm}[1]{\lVert #1\rVert}

\title{A Globally Asymptotically Stable \\Planar Homogeneous Polynomial Vector Field \\With No Polynomial Lyapunov Function}
\author{Jun~Liu and Maxwell~Fitzsimmons%
\thanks{This work was supported in part by the Natural Sciences and
Engineering Research Council of Canada and the Canada Research Chairs
Program.}%
\thanks{Jun Liu and Maxwell Fitzsimmons are with the Department of Applied Mathematics, Faculty of
Mathematics, University of Waterloo, Waterloo, ON N2L 3G1, Canada (e-mail:
\texttt{j.liu@uwaterloo.ca}; \texttt{mfitzsimmons@uwaterloo.ca}).}}

\begin{document}

\maketitle

\begin{abstract}
We disprove the conjecture that every globally asymptotically stable
homogeneous polynomial vector field admits a homogeneous polynomial Lyapunov
function.  The counterexample is a planar homogeneous cubic polynomial vector
field with integer coefficients.  It admits no positive definite homogeneous
polynomial with nonpositive Lie derivative and, more strongly, no
real-analytic Lyapunov function even locally.  Nevertheless, it has an explicit
degree-two homogeneous Lyapunov function that is radially unbounded,
continuously differentiable everywhere, and smooth away from the origin. 
We also provide a machine-checked Lean 4 formalization of the main result. 
\end{abstract}

\begin{IEEEkeywords}
Asymptotic stability, converse Lyapunov theorems, homogeneous systems,
polynomial Lyapunov functions.
\end{IEEEkeywords}

\section{Introduction}\label{sec:introduction}

\IEEEPARstart{P}{olynomial} vector fields have finite algebraic descriptions,
but it is not known whether those descriptions suffice to decide asymptotic
stability of an equilibrium.  In a contribution to the proceedings of the
1974 AMS symposium on developments arising from Hilbert's problems, Arnold
asked the broader question of whether stability is decidable for polynomial
vector fields of fixed degree with rational coefficients
\cite[p.~59]{arnold1976problems}; he was later quoted as conjecturing that
``there is no algorithm for some sufficiently high degree and dimension''
\cite{daCostaDoria1993}.  For any fixed candidate degree, Tarski's
quantifier-elimination theorem \cite{tarski1998decision} can decide in finite
time whether there exists
a polynomial of at most that degree satisfying the strict Lyapunov inequalities
on some punctured neighbourhood of the equilibrium.  Classical converse
Lyapunov theorems guarantee
smooth, but not necessarily polynomial, Lyapunov functions
\cite{kellett2015converse}.  A converse
guaranteeing polynomial Lyapunov functions with a computable degree bound
would, together with Tarski's theorem, answer Arnold's question affirmatively.

Homogeneous polynomial vector fields were a natural test case.  A homogeneous
vector field is determined by its restriction to the unit sphere, local and
global asymptotic stability are equivalent, and asymptotic stability guarantees
a homogeneous Lyapunov function of any prescribed finite differentiability
class \cite[Prop.~1 and Thm.~2]{rosier1992}.  Despite this structure, deciding
asymptotic stability is strongly NP-hard already for homogeneous cubic vector
fields \cite{ahmadi2012difficulty}.  Ahmadi raised the polynomial Lyapunov
question in his doctoral thesis \cite[Sec.~4.6]{ahmadi2011thesis} and later
formulated the homogeneous polynomial Lyapunov converse conjecture: \emph{every
globally asymptotically stable homogeneous polynomial vector field admits a
homogeneous polynomial Lyapunov function}
\cite{ahmadi2012difficulty}.  Ahmadi and Parrilo subsequently restated the
conjecture as an open problem \cite[Secs.~4.2 and~6]{ahmadiParrilo2013}.

Subsequent work clarified the surrounding picture without resolving the
conjecture.  Ahmadi and El Khadir
proved that every asymptotically stable continuously differentiable homogeneous
vector field admits a homogeneous rational Lyapunov function and exhibited a
family with a fixed low-degree rational certificate but no uniform bound on
polynomial Lyapunov degree
\cite[Thm.~III.1 and Prop.~VI.1]{ahmadiElKhadir2020}.  They left open whether
the denominator could always be removed
\cite[Sec.~VII, Question~1]{ahmadiElKhadir2020}.  Complementing this algebraic
line, Fitzsimmons and Liu proposed a neural architecture with a universal
approximation guarantee for homogeneous Lyapunov functions
\cite{fitzsimmonsLiu2024}.  Previous work also showed
that neither the minimum polynomial Lyapunov degree
\cite{ahmadi2012difficulty,ahmadiParrilo2013} nor the numerator degree of a
rational Lyapunov function \cite[Prop.~V.1]{ahmadiElKhadir2020} can be bounded
solely in terms of state dimension and vector-field degree.  Earlier counterexamples showing non-existence of global
polynomial or even local real-analytic Lyapunov functions did not settle the
conjecture because their vector fields were nonhomogeneous
\cite[Prop.~5.2]{bacciottiRosier2005},
\cite{ahmadiKrsticParrilo2011,ahmadiElKhadir2018local}.

This paper disproves the homogeneous polynomial Lyapunov converse conjecture.
We give a planar homogeneous cubic polynomial vector field with integer
coefficients whose origin is globally asymptotically stable.  It has an explicit positive
definite, radially unbounded, $2$-homogeneous Lyapunov function in $C^1(\R^2)$, smooth away
from the origin, but no positive definite homogeneous polynomial with
nonpositive Lie derivative.  A leading-term argument rules out every
real-analytic Lyapunov function even locally.  The non-existence of homogeneous
polynomial Lyapunov functions persists on an open region of a two-parameter
family, showing robustness under parameter perturbations within that family.
The example is also minimal with respect to both state dimension and
vector-field degree:
$V(x)=x^2$ is a strict Lyapunov function for every asymptotically stable
scalar homogeneous polynomial system, linear systems admit quadratic Lyapunov
functions, and homogeneous vector fields of even degree cannot be
asymptotically stable \cite[Sec.~17]{hahn1967}.

The non-existence proof is carried out on the unit circle, where a homogeneous
polynomial candidate becomes a positive trigonometric polynomial.
Fej\'er--Riesz factorisation of the resulting positive trigonometric
polynomial, together with the Lyapunov inequality, yields complementary,
degree-independent bounds on the relevant Fourier coefficients.  The angular
forcing of the vector field makes these bounds incompatible.  A leading-term
argument then transfers the conclusion to local real-analytic Lyapunov
functions.

\section{Problem and Main Result}

Consider the autonomous system
\begin{equation}
    \dot z=f(z), \qquad z\in\R^n, \qquad f(0)=0.
\end{equation}
The vector field $f:\R^n \to \R^n$ is homogeneous of degree $d>0$ if
$f(\lambda z)=\lambda^d f(z)$ for every $\lambda>0$.  A function
$V\colon\R^n\to\R$ is homogeneous of degree $m>0$ if
$V(\lambda z)=\lambda^m V(z)$.  If $V$ is differentiable, its Lie derivative along $f$ is
\[
    \Lie V(z)=\nabla V(z)\cdot f(z).
\]
Here $\nabla V=(\partial V/\partial z_1,\ldots,\partial V/\partial z_n)^\top$
is the gradient of $V$.  We say that $V$ is \emph{positive definite} if $V(0)=0$
and $V(z)>0$ for $z\neq0$, and it is \emph{radially unbounded} if
$V(z)\to\infty$ as $\norm{z}\to\infty$.

\begin{definition}
A weak polynomial Lyapunov function is a polynomial $P$ such that $P(0)=0$,
$P(z)>0$ for every $z\neq0$, and $\Lie P(z)\leq0$ for every $z\neq0$.
It is strict if $\Lie P(z)<0$ for every $z\neq0$.
\end{definition}

We study the homogeneous cubic vector field $f=(f_1,f_2)$ given by
\begin{align}
    f_1(x,y)&=4x^3-x^2y-6xy^2-y^3, \label{eq:field-x}\\
    f_2(x,y)&=x^3+4x^2y+xy^2-6y^3. \label{eq:field-y}
\end{align}

\begin{theorem}[Main result]\label{thm:main}
For the vector field \eqref{eq:field-x}--\eqref{eq:field-y}, the following
statements hold.
\begin{enumerate}
    \item The origin is globally asymptotically stable.

    \item Define $H(0,0)=0$ and, for $(x,y)\neq(0,0)$, let
    \begin{equation}\label{eq:H-cartesian}
        H(x,y)=(x^2+y^2)
        \exp\!\left(-\frac{10xy}{x^2+y^2}\right).
    \end{equation}
    Then $H$ is positive definite, radially unbounded, and
    $2$-homogeneous.  Moreover,
    \begin{align*}
        H&\in C^1(\R^2)\cap C^\infty(\R^2\setminus\{0\}),\\
        \Lie H&=-2(x^2+y^2)H<0
    \end{align*}
    away from the origin.

    \item No positive definite homogeneous polynomial $P$ satisfies
    $\Lie P\leq0$.  In particular, no homogeneous polynomial Lyapunov
    function exists.

    \item There is no real-analytic function $V$ on any neighbourhood of
    the origin such that, for some $\rho>0$,
    \begin{align*}
        V(0)&=0,\\
        V(z)&>0 &&(0<\norm{z}<\rho),\\
        \Lie V(z)&\leq0 &&(0<\norm{z}<\rho).
    \end{align*}
\end{enumerate}
\end{theorem}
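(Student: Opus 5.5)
The plan is to pass to polar coordinates $x=r\cos\theta$, $y=r\sin\theta$, where the field becomes very simple. A direct expansion gives
\[
xf_1+yf_2=(x^2+y^2)(4x^2-6y^2),\qquad xf_2-yf_1=(x^2+y^2)^2 .
\]
Hence
\[
\dot r = r^3(-1+5\cos2\theta),\qquad \dot\theta=r^2 .
\]
The angle therefore rotates monotonically, and along orbits $\mathrm{d}\ln r/\mathrm{d}\theta=-1+5\cos2\theta$.

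For item 2, write $H=r^2\exp(-5\sin2\theta)$. Then
\[
\frac{\mathrm{d}\ln H}{\mathrm{d}\theta}=2(-1+5\cos2\theta)-10\cos2\theta=-2 ,
\]
so $\Lie H=\dot\theta\cdot(-2H)=-2r^2H$. Positivity, $2$-homogeneity and radial unboundedness are immediate, since $e^{-5}r^2\le H\le e^{5}r^2$. Smoothness away from $0$ is clear. $C^1$ at the origin follows from $H=O(r^2)$ together with the fact that $\nabla H$ is $1$-homogeneous and bounded on the circle. Item 1 then follows from the standard Lyapunov theorem applied to $H$.

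For item 3, I would prove the stronger statement needed later: there is no nonzero homogeneous polynomial $P\ge0$ of degree $m$ with $\Lie P\le0$. Write $P=r^m p(\theta)$. Then $m$ is even, $p$ is a nonnegative, $\pi$-periodic trigonometric polynomial (only even frequencies), and
\[
\Lie P=r^{m+2}\bigl(p'+m(5\cos2\theta-1)p\bigr).
\]
So we need $g:=p'+m(5\cos2\theta-1)p\le0$. Next I would integrate $g$ against nonnegative weights $w$ of the form $1$, $1\pm\cos2\theta$, $1\pm\sin2\theta$ (and possibly $1\pm\cos4\theta$). After integrating by parts, $\int w p'=-\int w' p$, this gives linear inequalities in the normalised Fourier coefficients $c_k=\hat p(k)/\hat p(0)$ for $k\in\{2,4,6\}$. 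In each inequality the derivative term appears with a factor $1/m$ relative to the main term.

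For example, $w=1$ gives $5\,\mathrm{Re}\,c_2\le 2$. The weights involving $\cos2\theta$ and $\sin2\theta$ couple $c_2$ to $c_4$ through the product $\cos2\theta\cdot w$. The complementary, degree-independent constraint comes from nonnegativity of $p$: by Fej\'er--Riesz, $p=|h(e^{i\theta})|^2$, so the Toeplitz matrix $(\hat p(2j-2k))_{j,k=0}^{2}$ is positive semidefinite. This yields, for instance, $|c_2|\le1$ and $|c_4-c_2^2|\le 1-|c_2|^2$. The goal is to show that the linear constraints, with the $O(1/m)$ terms bounded using $|c_k|\le1$ and $m\ge2$, are incompatible with the PSD constraint. \emph{The main obstacle is choosing the right weights.} My first attempt would be to take $w$ as a nonnegative combination of $1,\cos2\theta,\sin2\theta$, chosen to mimic the exact weak solution $p\propto e^{-(5m/2)\sin2\theta}$ to first order. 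The strength of the forcing coefficient $5$ (compare $\mathrm{d}\ln H/\mathrm{d}\theta$) is what should make the resulting system infeasible.

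For item 4, suppose $V$ is real-analytic near $0$ and satisfies the conditions of the theorem. Expand $V=\sum_{k\ge m}V_k$ with $V_m\not\equiv0$ the lowest-order homogeneous part. Since $V>0$ on a punctured neighbourhood, scaling $V(\lambda z)/\lambda^m\to V_m(z)$ gives $V_m\ge0$. Since $f$ is $3$-homogeneous, the lowest-order part of $\Lie V$ is $\Lie V_m$, of degree $m+2$, and the same scaling argument gives $\Lie V_m\le0$. Because $V_m$ may have zeros on the circle, this is exactly why item 3 should be proved for nonnegative, nonzero $p$. The Toeplitz argument only uses $p\ge0$, $p\not\equiv0$ (so that $\hat p(0)>0$), so item 3 in that form rules out $V_m$ and gives the contradiction.
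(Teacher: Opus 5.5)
Your treatment of items 1 and 2 and the leading-term reduction in item 4 agrees with the paper. The plan for item 3, however, cannot work for any choice of weights.

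Every inequality you propose to use follows from $p\ge0$ and the pointwise Lyapunov inequality alone: the weighted integrals $\int w\,\bigl(p'+m(5\cos2\theta-1)p\bigr)\,\dd\theta\le0$ with $w\ge0$, Toeplitz positive semidefiniteness, and $|c_k|\le1$. None of them uses that $p$ is a trigonometric polynomial of degree at most $m$. But the smooth positive profile $p_*(\theta)=\e^{-(5m/2)\sin2\theta}$ is the very ``weak solution'' you want to mimic, and it satisfies $p_*'+m(5\cos2\theta-1)p_*=-mp_*<0$. So its normalised Fourier coefficients satisfy all of your constraints at once. For $m\ge6$ the degree bound does not force any of $c_2,c_4,c_6$ to vanish, so your system is feasible and no contradiction can emerge. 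Bounding the $O(1/m)$ terms via $|c_k|\le1$ and $m\ge2$ only enlarges the feasible set. For the same reason, the size of the forcing coefficient cannot help by itself: $\e^{-(bm/2)\sin2\theta}$ is a witness for every $b$.

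The missing idea is to make the degree enter quantitatively, which requires dividing by $p$. With $p>0$, use the Fej\'er--Riesz factor with no zeros in the closed unit disk. This gives $p(\theta)=|Q(\e^{2i\theta})|^2$ with $Q(z)=C\prod_{\ell=1}^{L}(1-\alpha_\ell z)$, $|\alpha_\ell|<1$ and $L\le m/2$. Then $g=p'/(mp)$ has $\widehat g_2=-(2i/m)\sum_\ell\alpha_\ell$, so $|\widehat g_2|<2L/m\le1$. On the other hand, $w=1-5\cos2\theta-g\ge0$ gives $|\widehat w_2|\le\widehat w_0=1$, while $\widehat w_2=-\tfrac52-\widehat g_2$. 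Hence $\tfrac52\le|\widehat w_2|+|\widehat g_2|<2$, a contradiction. For $p_*$ one gets $\widehat g_2=-\tfrac52$; this is exactly the degree information your linear scheme cannot see.

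This route needs $p>0$, so in item 4 you must add a separate step showing the leading form $P_m$ is positive definite. If $p_m(\theta_0)=0$, then $\exp\bigl(m\int_{\theta_0}^{\theta}(-1+5\cos2s)\,\dd s\bigr)p_m(\theta)$ is nonnegative, nonincreasing and zero at $\theta_0$. It therefore vanishes on a whole period, contradicting $P_m\neq0$.

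A minor slip: with $c_k=\hat p(k)/\hat p(0)$, the weight $w=1$ gives $5\,\mathrm{Re}\,c_2\le1$, not $\le2$.
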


Section~\ref{sec:proof} proves the theorem.  Subsection~\ref{sec:stability}
derives the polar dynamics and verifies the stated properties of $H$, thereby
establishing global asymptotic stability.  Subsection~\ref{sec:fourier}
develops the needed facts about positive
trigonometric polynomials, which Subsection~\ref{sec:obstruction} uses to
rule out homogeneous polynomial Lyapunov functions.  Subsection~\ref{sec:local}
extends the non-existence result to local real-analytic Lyapunov functions.
Some further remarks are collected in Section~\ref{sec:remarks}.

\section{Proof of the Main Theorem}\label{sec:proof}

\subsection{Lyapunov Analysis of Global Asymptotic Stability}\label{sec:stability}

Write $x=r\cos\theta$ and $y=r\sin\theta$, with $r>0$.  For a planar
vector field,
\begin{equation}\label{eq:polar-general}
    \dot r=\frac{xf_1+yf_2}{r},
    \qquad
    \dot\theta=\frac{xf_2-yf_1}{r^2}.
\end{equation}
Direct expansion gives
\begin{align}
    xf_1+yf_2
        &=4x^4-2x^2y^2-6y^4
          =r^4(-1+5\cos2\theta), \label{eq:radial-identity}\\
    xf_2-yf_1
        &=(x^2+y^2)^2=r^4. \label{eq:angular-identity}
\end{align}
Consequently,
\begin{equation}\label{eq:polar-system}
    \dot r=r^3(-1+5\cos2\theta),
    \qquad
    \dot\theta=r^2.
\end{equation}

In polar coordinates, \eqref{eq:H-cartesian} becomes
\begin{equation}\label{eq:H-polar}
    H(r,\theta)=r^2\e^{-5\sin2\theta}.
\end{equation}
It satisfies
\begin{equation}\label{eq:H-bounds}
    \e^{-5}r^2\leq H(r,\theta)\leq\e^5r^2.
\end{equation}
For $r>0$, \eqref{eq:polar-system} gives
\begin{align*}
    \frac{\dot H}{H}
        &=2\frac{\dot r}{r}-10\cos2\theta\,\dot\theta\\
        &=2r^2(-1+5\cos2\theta)-10r^2\cos2\theta\\
        &=-2r^2.
\end{align*}
This proves the derivative identity in Theorem~\ref{thm:main}(2).  The
bounds \eqref{eq:H-bounds} prove positive definiteness and radial
unboundedness.  It remains to verify the regularity of $H$ at the origin.

For a multi-index $\alpha=(\alpha_1,\ldots,\alpha_n)\in\mathbb{Z}_{\geq0}^n$,
write
\begin{align*}
    |\alpha|&=\alpha_1+\cdots+\alpha_n,
    &\alpha!&=\alpha_1!\cdots\alpha_n!,\\
    x^\alpha&=x_1^{\alpha_1}\cdots x_n^{\alpha_n}.
\end{align*}
We use
$\partial^\alpha F=\partial_1^{\alpha_1}\cdots\partial_n^{\alpha_n}F$,
with $\partial^0F=F$.  Let $S^{n-1}=\{x\in \R^n: \norm{x}=1\}$.

\begin{lemma}[Regularity at a cone tip]\label{lem:cone}
Let $k$ be a positive integer and let $h\in C^\infty(S^{n-1})$.  Define
\[
    F(x)=
    \begin{cases}
        \norm{x}^k h\!\left(x/\norm{x}\right),&x\neq0,\\
        0,&x=0.
    \end{cases}
\]
Then
\[
    F\in C^{k-1}(\R^n)\cap C^\infty(\R^n\setminus\{0\}),
\]
and every derivative of order at most $k-1$ vanishes at the origin.
\end{lemma}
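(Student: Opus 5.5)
We first handle smoothness away from the origin, then establish the key structural fact that each derivative of $F$ is again a positively homogeneous function with a smooth angular profile, and finally run an induction on the order of derivatives at the origin.

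\emph{Smoothness off the origin.} On $\R^n\setminus\{0\}$ the map $x\mapsto x/\norm{x}$ is smooth with values in $S^{n-1}$, and $\norm{x}^k$ is smooth there. Extending $h$ to a smooth function $\tilde h$ on a neighbourhood of $S^{n-1}$ (for instance $\tilde h(y)=h(y/\norm{y})$ for $\norm{y}>1/2$), we get $F=\norm{x}^k\tilde h(x/\norm{x})$ on $\R^n\setminus\{0\}$. This is a composition of smooth maps, so $F\in C^\infty(\R^n\setminus\{0\})$.

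\emph{Homogeneous structure of derivatives.} Away from the origin, $F(\lambda x)=\lambda^kF(x)$ for $\lambda>0$. Differentiating this identity shows that $\partial^\alpha F$ is positively homogeneous of degree $k-|\alpha|$ on $\R^n\setminus\{0\}$. Hence
\[
\partial^\alpha F(x)=\norm{x}^{k-|\alpha|}h_\alpha(x/\norm{x}),\qquad h_\alpha:=\partial^\alpha F|_{S^{n-1}}\in C^\infty(S^{n-1}).
\]
Since $S^{n-1}$ is compact, $h_\alpha$ is bounded by some constant $M_\alpha$. It follows that $|\partial^\alpha F(x)|\le M_\alpha\norm{x}^{k-|\alpha|}$ for $x\neq0$.

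\emph{Induction at the origin.} We prove by induction on $j=0,\dots,k-1$ that every $\partial^\alpha F$ with $|\alpha|=j$ exists on all of $\R^n$, vanishes at $0$, and is continuous.

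For $j=0$: $F(0)=0$, and $|F(x)|\le M_0\norm{x}^k\to0$ as $x\to0$, so $F$ is continuous.

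For the inductive step, suppose the claim holds for $|\alpha|=j<k-1$. Then $\partial^\alpha F(0)=0$, and for each coordinate direction $e_i$,
\[
\left|\frac{\partial^\alpha F(te_i)-\partial^\alpha F(0)}{t}\right|\le M_\alpha|t|^{k-j-1}\to0,
\]
because $k-j-1\ge1$. So $\partial_i\partial^\alpha F(0)$ exists and equals $0$. Continuity of $\partial_i\partial^\alpha F$ at the origin follows from the bound $M_{\alpha+e_i}\norm{x}^{k-j-1}\to0$. Continuity elsewhere holds by the first step.

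This gives $F\in C^{k-1}(\R^n)$, with all derivatives of order at most $k-1$ vanishing at $0$.

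\emph{Main obstacle.} The only delicate point is the bookkeeping that guarantees the exponent $k-|\alpha|$ stays positive, so that both the difference quotient and the continuity bound tend to zero. This holds exactly up to order $k-1$. At order $k$ the derivatives are $0$-homogeneous and generally discontinuous at the origin, which is why the lemma stops at $C^{k-1}$.

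One further check is that the homogeneity identity for derivatives is legitimate. This is immediate from the chain rule applied to $F(\lambda x)=\lambda^kF(x)$ on the open set $\R^n\setminus\{0\}$.
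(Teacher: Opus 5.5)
Your proof is correct and follows the paper's argument: smoothness off the origin, homogeneity of degree $k-|\alpha|$ for $\partial^\alpha F$ giving the bound $C_\alpha\norm{x}^{k-|\alpha|}$, and an induction on $|\alpha|$ at the origin. The only cosmetic difference is that you use coordinate difference quotients to get the partial derivatives at the origin, whereas the paper uses the full quotient over $\norm{x}$ to get Fr\'echet differentiability. Either suffices, since continuous partials through order $k-1$ give $C^{k-1}$.
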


\begin{proof}
For $x\neq0$, the chain rule shows that all derivatives
$\partial^\alpha F$ exist and are continuous.  Differentiating
$F(tx)=t^kF(x)$ with respect to $x$ shows that $\partial^\alpha F$ is
homogeneous of degree $k-|\alpha|$.  Hence, with
\[
    C_\alpha=\max_{\hat x\in S^{n-1}}
        |\partial^\alpha F(\hat x)|,
\]
we have
\begin{equation}\label{eq:derivative-bound}
    |\partial^\alpha F(x)|
        =\norm{x}^{k-|\alpha|}
          |\partial^\alpha F(x/\norm{x})|
        \leq C_\alpha\norm{x}^{k-|\alpha|}.
\end{equation}
For $|\alpha|\leq k-1$, extend $\partial^\alpha F$ to the origin by setting
$\partial^\alpha F(0)=0$.  The estimate shows that each extension is
continuous.  For $|\alpha|\leq k-2$,
\[
    \left|
        \frac{\partial^\alpha F(x)-\partial^\alpha F(0)}{\norm{x}}
    \right|
    \leq C_\alpha\norm{x}^{k-|\alpha|-1}\longrightarrow0
    \qquad(x\to0).
\]
Thus each such extension is differentiable at the origin with derivative
zero.  Induction on $|\alpha|$ shows that these extensions are the partial
derivatives of $F$ through order $k-1$, proving the claim.
\end{proof}

Lemma~\ref{lem:cone}, with $k=2$, shows that $H\in C^1(\R^2)$.  Its angular
profile is smooth, so it is smooth away from the origin.  The bounds
\eqref{eq:H-bounds} and the strict derivative inequality now imply global
asymptotic stability by Lyapunov's direct method.  This proves
Theorem~\ref{thm:main}(1)--(2).

\subsection{Preliminaries on Positive Trigonometric Polynomials}%
\label{sec:fourier}

For a $2\pi$-periodic integrable function $u$, write
\begin{equation}\label{eq:fourier-convention}
    \widehat u_j=\frac{1}{2\pi}\int_0^{2\pi}
        u(\theta)\e^{-ij\theta}\,\dd\theta.
\end{equation}
The non-existence proof rests on the strict Fej\'er--Riesz factorisation for
scalar trigonometric polynomials
\cite[Thm.~1.1 and Rem.~1.2]{dumitrescu2017} and a resulting degree-independent
bound on the second Fourier coefficient of a normalised logarithmic derivative.
We record both facts here.  In the factorisation, we use the factor with no
zeros in the closed unit disk; an elementary proof is included to make this
choice explicit.

\begin{lemma}[Strict trigonometric factorisation]\label{lem:factorization}
Let $q$ be a real-valued trigonometric polynomial of actual Fourier degree
$L$, written as
\[
    q(\varphi)=\sum_{j=-L}^{L}c_j\e^{ij\varphi}.
\]
Suppose that $q(\varphi)>0$ for every $\varphi\in\R$.  Then there is a
polynomial $Q$ of degree $L$, with no zeros in the closed unit disk, such that
\[
    q(\varphi)=\lvert Q(\e^{i\varphi})\rvert^2.
\]
\end{lemma}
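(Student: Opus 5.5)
The plan is the classical root-pairing argument. Since $q$ is real-valued, $c_{-j}=\overline{c_j}$. The actual degree is $L$, so $c_L\neq0$. Form the Laurent polynomial
\[
    g(w)=\sum_{j=-L}^{L}c_jw^j,
\]
and set $G(w)=w^Lg(w)$. Then $G$ is an ordinary polynomial of degree exactly $2L$ with $G(0)=c_{-L}\neq0$, and $q(\varphi)=g(\e^{i\varphi})$. The case $L=0$ is trivial: $q=c_0>0$ and we take $Q=\sqrt{c_0}$. So assume $L\geq1$.

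The first step is the reflection symmetry. From $c_{-j}=\overline{c_j}$ one checks that
\[
    g(1/\bar w)=\overline{g(w)}\qquad(w\neq0).
\]
Hence $w_0$ is a zero of $G$ if and only if $1/\bar w_0$ is, and with the same multiplicity, by differentiating the identity or comparing factorisations. No zero lies on the unit circle, because $g(\e^{i\varphi})=q(\varphi)>0$. Zero is not a root either, since $G(0)\neq0$. The $2L$ roots therefore split into $L$ roots $a_1,\ldots,a_L$ with $|a_k|>1$ and their reflections $1/\bar a_k$ inside the open disk.

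The second step is to write
\[
    G(w)=c_L\prod_{k=1}^{L}(w-a_k)(w-1/\bar a_k)
\]
and then define
\[
    Q(w)=\gamma\prod_{k=1}^{L}(w-a_k),
\]
where the constant $\gamma$ is chosen below. This $Q$ has degree $L$ and no zeros in the closed disk.

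To fix $\gamma$, compute on $|w|=1$, where $\bar w=1/w$. There,
\[
    |w-1/\bar a_k|=\frac{|\bar a_k w-1|}{|a_k|}=\frac{|w-a_k|}{|a_k|}.
\]
Therefore
\[
    q(\varphi)=|G(\e^{i\varphi})|=|c_L|\prod_k|a_k|^{-1}\,\bigl|\textstyle\prod_k(\e^{i\varphi}-a_k)\bigr|^2,
\]
where the first equality holds because $|w^L|=1$ and $q=g>0$. Taking $\gamma=\bigl(|c_L|/\prod_k|a_k|\bigr)^{1/2}$ gives $q(\varphi)=|Q(\e^{i\varphi})|^2$.

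The only delicate point is the multiplicity-preserving pairing of roots under $w\mapsto1/\bar w$. The cleanest way to handle it is to observe that $w^{2L}\overline{G(1/\bar w)}=\overline{c_{-L}}^{\,-1}c_L\,G(w)$, up to the appropriate constant, as a polynomial identity. This identifies the root multisets of the two polynomials directly and avoids any derivative argument.
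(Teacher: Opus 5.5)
Your proof is correct and follows essentially the same route as the paper. The self-inversive identity $w^{2L}\overline{G(1/\bar w)}=G(w)$ pairs the roots with their multiplicities; your hedged constant is in fact exactly $1$, since $\overline{c_{-L}}=c_L$. As in the paper, $Q$ is then built from the $L$ roots outside the disk. The only difference is cosmetic: you fix the normalising constant by computing $|G(\e^{i\varphi})|$ directly via $q=|q|$, whereas the paper writes $A=\kappa BB^\#$ and deduces $\kappa>0$ from positivity.
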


\begin{proof}
The case $L=0$ is immediate.  Suppose $L\geq1$.  Since $q$ is real-valued,
$c_{-j}=\overline{c_j}$.  Define
\[
    A(z)=z^L\sum_{j=-L}^{L}c_jz^j.
\]
Then $A$ is a polynomial of degree $2L$ with nonzero constant term, and
\begin{equation}\label{eq:self-inversive}
    A(z)=z^{2L}\overline{A(1/\overline z)}.
\end{equation}
Thus every root $\zeta$ is paired, with the same multiplicity, with
$1/\overline\zeta$.  There are no roots on the unit circle because
\[
    A(\e^{i\varphi})=\e^{iL\varphi}q(\varphi)\neq0.
\]
Exactly $L$ roots, counted with multiplicity, therefore lie outside the
unit disk.  Denote them by $\beta_1,\ldots,\beta_L$, and set
\[
    B(z)=\prod_{\ell=1}^{L}(z-\beta_\ell),
    \qquad
    B^\#(z)=z^L\overline{B(1/\overline z)}.
\]
The roots of $B^\#$ are the reciprocal conjugates of the $\beta_\ell$.
Hence $A$ and $BB^\#$ have the same roots and degree, so
$A=\kappa BB^\#$ for a nonzero constant $\kappa$.  On the unit circle,
$B^\#(z)=z^L\overline{B(z)}$, and consequently
\[
    q(\varphi)=\kappa\lvert B(\e^{i\varphi})\rvert^2.
\]
Strict positivity of $q$ forces $\kappa>0$.  Taking
$Q=\sqrt{\kappa}\,B$ proves the result.
\end{proof}

\begin{lemma}[Logarithmic second-harmonic bound]\label{lem:log-bound}
Let $N\geq1$, and let $P$ be a positive definite homogeneous polynomial of
degree $2N$ in two variables.  Define
\[
    p(\theta)=P(\cos\theta,\sin\theta),
    \qquad
    g(\theta)=\frac{1}{2N}\frac{p'(\theta)}{p(\theta)}.
\]
Then
\begin{equation}\label{eq:g-bound}
    \lvert\widehat g_2\rvert<1.
\end{equation}
\end{lemma}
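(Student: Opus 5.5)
The plan is to reduce to Lemma~\ref{lem:factorization} through the substitution $\varphi=2\theta$, and then read off $\widehat g_2$ as an explicit sum over the roots of the factor $Q$.

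\textbf{Step 1: $p$ is a positive trigonometric polynomial in $2\theta$.} Each monomial $\cos^a\theta\sin^b\theta$ with $a+b=2N$ is a trigonometric polynomial whose frequencies all have the same parity as $2N$. Hence $p(\theta)=\sum_{|j|\le N}c_je^{2ij\theta}$. So $p(\theta)=q(2\theta)$, where
\[
q(\varphi)=\sum_{|j|\le N}c_je^{ij\varphi}
\]
is real-valued with actual degree $L\le N$. Positive definiteness of $P$ gives $q>0$ everywhere. Lemma~\ref{lem:factorization} then yields
\[
q(\varphi)=\kappa\prod_{\ell=1}^L\lvert e^{i\varphi}-\beta_\ell\rvert^2
\]
with $\kappa>0$ and $\lvert\beta_\ell\rvert>1$.

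\textbf{Step 2: transfer the coefficient to $q$.} By the chain rule,
\[
g(\theta)=\frac{1}{2N}\cdot 2\,\frac{q'(2\theta)}{q(2\theta)}=\frac1N\,G(2\theta),
\qquad G=(\log q)'.
\]
For any $2\pi$-periodic $G$, the second Fourier coefficient of $\theta\mapsto G(2\theta)$ equals $\widehat G_1$; this is a direct change of variables in \eqref{eq:fourier-convention}. Integration by parts gives $\widehat G_1=i\,\widehat{(\log q)}_1$. Therefore
\[
\lvert\widehat g_2\rvert=\frac1N\,\bigl\lvert\widehat{(\log q)}_1\bigr\rvert.
\]

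\textbf{Step 3: expand the logarithm.} For $\lvert\beta\rvert>1$ we have
\[
\log\lvert e^{i\varphi}-\beta\rvert^2=\log\lvert\beta\rvert^2+\log\bigl(1-e^{i\varphi}/\beta\bigr)+\log\bigl(1-e^{-i\varphi}/\overline\beta\bigr).
\]
The power series $-\sum_{k\ge1}(e^{i\varphi}/\beta)^k/k$ converges uniformly, so it may be integrated termwise. The $e^{i\varphi}$-coefficient of each factor is therefore exactly $-1/\beta$; the conjugate series contributes only negative frequencies. Summing over the factors, $\widehat{(\log q)}_1=-\sum_\ell\beta_\ell^{-1}$, and so
\[
\lvert\widehat g_2\rvert\le\frac1N\sum_{\ell=1}^L\lvert\beta_\ell\rvert^{-1}.
\]
If $L=0$ this sum is $0<1$. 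Otherwise each term is $<1$ and $L\le N$, so the bound is $<L/N\le1$.

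The main point needing care is Step~1: showing that only even frequencies of size at most $N$ appear, so that the factorisation degree $L$ is at most $N$ rather than $2N$. This is exactly what makes the bound degree-independent. The remaining steps are routine Fourier bookkeeping.
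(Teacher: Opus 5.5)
Your proof is correct and follows essentially the paper's argument. You pass to $q(\varphi)=p(\varphi/2)$ of actual degree $L\le N$, apply the strict Fej\'er--Riesz factorisation, and expand the logarithm to obtain $\widehat g_2=-\frac{i}{N}\sum_\ell \beta_\ell^{-1}$, which is the paper's formula with $\alpha_\ell=\beta_\ell^{-1}$. The only cosmetic difference is that you reach $\widehat{(\log q)}_1$ through the substitution $\varphi=2\theta$ and integration by parts, whereas the paper differentiates the series for $\frac{1}{2N}\log q(2\theta)$ directly in $\theta$.
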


\begin{proof}
The restriction of a degree-$2N$ homogeneous polynomial to the unit circle
has only even Fourier modes:
\[
    p(\theta)=\sum_{j=-N}^{N}c_j\e^{2ij\theta}.
\]
Since $p$ is $\pi$-periodic, the function $q(\varphi)=p(\varphi/2)$ is a
well-defined, strictly positive, $2\pi$-periodic trigonometric polynomial.
Let $L\leq N$ be its actual Fourier degree.  If $L=0$, then $q$, and hence $p$,
is constant, so $g=0$ and the result follows.  We may therefore assume
$L\geq1$.
Lemma~\ref{lem:factorization} gives
\begin{equation}\label{eq:q-factor}
    q(\varphi)=\lvert Q(\e^{i\varphi})\rvert^2,
    \qquad
    Q(z)=C\prod_{\ell=1}^{L}(1-\alpha_\ell z),
    \qquad \lvert\alpha_\ell\rvert<1.
\end{equation}
For $\lvert\alpha\rvert<1$, the logarithmic series
\[
    \log(1-\alpha\e^{i2\theta})
        =-\sum_{k=1}^{\infty}\frac{\alpha^k}{k}\e^{i2k\theta}
\]
and its differentiated series converge uniformly.  Define
\[
    G(\theta)=\frac{1}{2N}\log p(\theta)
             =\frac{1}{2N}\log q(2\theta).
\]
Using \eqref{eq:q-factor} and the logarithmic branches above gives
\[
\begin{aligned}
    G(\theta)=\frac{1}{2N}\bigg\{\log|C|^2
        +\sum_{\ell=1}^{L}\big[&\log(1-\alpha_\ell\e^{i2\theta})\\
        &+\log(1-\overline{\alpha_\ell}\e^{-i2\theta})\big]\bigg\}.
\end{aligned}
\]
Since $g=G'$, comparison of the second Fourier coefficients gives
\[
    \widehat g_2=-\frac{i}{N}\sum_{\ell=1}^{L}\alpha_\ell.
\]
It follows that
\[
    \lvert\widehat g_2\rvert
        \leq\frac{1}{N}\sum_{\ell=1}^{L}\lvert\alpha_\ell\rvert
        <\frac{L}{N}\leq1.
\]
\end{proof}

\subsection{Non-existence of Homogeneous Polynomial Lyapunov Functions}%
\label{sec:obstruction}

Suppose, towards a contradiction, that $P$ is a positive definite
homogeneous polynomial satisfying $\Lie P\leq0$.  Its degree is even, say
$2N$, because a homogeneous polynomial of odd degree changes sign under
$(x,y)\mapsto(-x,-y)$.  Set
\[
    p(\theta)=P(\cos\theta,\sin\theta)>0.
\]
Then $P(r,\theta)=r^{2N}p(\theta)$, and \eqref{eq:polar-system} yields
\begin{equation}\label{eq:LP-polar}
    \Lie P=r^{2N+2}
        \left[p'(\theta)+2N(-1+5\cos2\theta)p(\theta)\right].
\end{equation}
Define
\begin{equation}\label{eq:g-w}
    g(\theta)=\frac{1}{2N}\frac{p'(\theta)}{p(\theta)},
    \qquad
    w(\theta)=1-5\cos2\theta-g(\theta).
\end{equation}
The assumed Lie derivative inequality implies $w\geq0$.  Since
$g=(2N)^{-1}(\log p)'$ and $p$ is positive and $2\pi$-periodic,
\[
    \widehat g_0
        =\frac{1}{4\pi N}
          \left.\log p(\theta)\right|_{\theta=0}^{\theta=2\pi}
        =\frac{\log p(2\pi)-\log p(0)}{4\pi N}=0.
\]
Since $w=1-5\cos2\theta-g$ and $\cos2\theta$ has zero average,
$\widehat w_0=1$.  Using $w\geq0$, \eqref{eq:fourier-convention}, and
$|\e^{-2i\theta}|=1$ gives
\begin{equation}\label{eq:w-bound}
    \lvert\widehat w_2\rvert
        \leq\frac{1}{2\pi}\int_0^{2\pi}w(\theta)\,\dd\theta
        =\widehat w_0=1.
\end{equation}
Since $\widehat{\cos2\theta}_2=1/2$, taking the second Fourier coefficient
in \eqref{eq:g-w} gives
\[
    \widehat w_2=-\frac52-\widehat g_2.
\]
Therefore Lemma~\ref{lem:log-bound} and \eqref{eq:w-bound} imply
\[
    \frac52
        =\lvert\widehat w_2+\widehat g_2\rvert
        \leq\lvert\widehat w_2\rvert+\lvert\widehat g_2\rvert
        <2,
\]
which is impossible.  This proves Theorem~\ref{thm:main}(3) for homogeneous
polynomials of every degree, even under the weak derivative inequality.

\subsection{Non-existence of Local Real-Analytic Lyapunov Functions}%
\label{sec:local}

Suppose that a real-analytic function $V$ satisfies the local conditions in
Theorem~\ref{thm:main}(4).  Its Taylor expansion at the origin
has the form
\begin{equation}\label{eq:Taylor}
    V=P_m+P_{m+1}+\cdots,
\end{equation}
where each $P_j$ is homogeneous of degree $j$ and $P_m\neq0$
\cite[Ch.~2]{krantzParks2002}.  Evaluating
along a ray and letting $t\downarrow0$ gives
\[
    P_m(x)\geq0\qquad(x\in\R^2).
\]
Since $f$ is homogeneous of degree three,
\[
    \Lie V(tx)=t^{m+2}\Lie P_m(x)+o(t^{m+2}).
\]
The local derivative inequality therefore implies
\begin{equation}\label{eq:leading-Lie}
    \Lie P_m(x)\leq0\qquad(x\in\R^2).
\end{equation}

It remains to show that $P_m$ is positive definite.  Put
\[
    p_m(\theta)=P_m(\cos\theta,\sin\theta)\geq0.
\]
By the same calculation as in \eqref{eq:LP-polar},
\begin{equation}\label{eq:leading-angular}
    p_m'(\theta)+m(-1+5\cos2\theta)p_m(\theta)\leq0.
\end{equation}
If $p_m(\theta_0)=0$, define, for
$\theta\in[\theta_0,\theta_0+2\pi]$,
\[
    F(\theta)=
    \exp\!\left(
        m\int_{\theta_0}^{\theta}(-1+5\cos2s)\,\dd s
    \right)p_m(\theta).
\]
Then $F\geq0$, $F(\theta_0)=0$, and \eqref{eq:leading-angular} gives
$F'\leq0$.  Hence $F$ vanishes throughout the interval, contradicting
$P_m\neq0$.  Thus $p_m>0$, so $P_m$ is a positive definite homogeneous
polynomial satisfying \eqref{eq:leading-Lie}.  This contradicts
Theorem~\ref{thm:main}(3) and thus proves Theorem~\ref{thm:main}(4).

\section{Further Remarks}\label{sec:remarks}

\subsection{Regularity of Homogeneous Functions}

The loss of one derivative at the origin (see Lemma~\ref{lem:cone}) is sharp
in the following sense: there exist degree-$m$ homogeneous functions that are
$C^{m-1}$ without being polynomials, whereas $C^m$ regularity forces such a
function to be a homogeneous polynomial.

\begin{lemma}[Polynomiality of homogeneous functions]\label{lem:polynomiality}
Let $m$ be a positive integer.  If $U\colon\R^n\to\R$ is homogeneous of
degree $m$ and is $C^m$ in a neighbourhood of the origin, then
\begin{equation}\label{eq:homogeneous-taylor}
    U(x)=\sum_{|\alpha|=m}
        \frac{\partial^\alpha U(0)}{\alpha!}x^\alpha.
\end{equation}
In particular, $U$ is a homogeneous polynomial of degree $m$.
\end{lemma}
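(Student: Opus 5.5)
The plan is to compare $U$ with its degree-$m$ Taylor polynomial at the origin and use homogeneity to show that the remainder must vanish identically. Since $U$ is $C^m$ on a ball $B$ around $0$, Taylor's theorem with Peano remainder gives, for $x\in B$,
\[
    U(x)=\sum_{|\alpha|\leq m}\frac{\partial^\alpha U(0)}{\alpha!}x^\alpha+R(x),
    \qquad R(x)=o(\norm{x}^m)\quad(x\to0).
\]
Write $T_j(x)=\sum_{|\alpha|=j}\partial^\alpha U(0)x^\alpha/\alpha!$ for the homogeneous part of degree $j$, so that $U=\sum_{j=0}^m T_j+R$ near $0$.

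First I will show that $T_j=0$ for $j<m$. Fix $x\neq0$ and substitute $tx$ with $t\downarrow0$. Homogeneity gives $U(tx)=t^mU(x)$, while the expansion gives
\[
    U(tx)=\sum_{j=0}^{m}t^jT_j(x)+o(t^m).
\]
Comparing the two expressions, the polynomial $\sum_{j<m}t^jT_j(x)$ is $O(t^m)$ as $t\to0$. Matching powers of $t$ from the lowest one upward then forces $T_j(x)=0$ for every $j<m$ and every $x$.

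Next I will show that $U=T_m$ everywhere. By the previous step, $t^mU(x)=t^mT_m(x)+o(t^m)$. Dividing by $t^m$ and letting $t\to0$ gives $U(x)=T_m(x)$ for each $x\neq0$; here $x$ is arbitrary because $tx$ lies in $B$ for small $t$. At the origin, homogeneity gives $U(0)=\lambda^mU(0)$ for every $\lambda>0$, so $U(0)=0=T_m(0)$. This yields \eqref{eq:homogeneous-taylor}.

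There is no real obstacle in this argument. The only point needing care is to use the Peano form of the remainder, which requires only $C^m$ regularity. The Lagrange form would need $C^{m+1}$, so it should be avoided. Apart from that, the $o(t^m)$ bookkeeping along each ray is routine.
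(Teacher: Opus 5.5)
Your proposal is correct and follows the paper's route: a Peano-form Taylor expansion along the ray $t\mapsto tx$, compared with $t^mU(x)$, forces every coefficient of a polynomial in $t$ that is $o(t^m)$ to vanish. Your two-step split (lower-order parts first, then the $t^m$ coefficient) and your separate treatment of $x=0$ simply spell out what the paper compresses into a single coefficient comparison.
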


\begin{proof}
Fix $x\in\R^n$.  Taylor expansion at the origin gives
\[
    U(tx)=\sum_{|\alpha|\leq m}
        \frac{\partial^\alpha U(0)}{\alpha!}
        t^{|\alpha|}x^\alpha+o(t^m)
    \qquad(t\downarrow0).
\]
For $t>0$, homogeneity gives $U(tx)=t^mU(x)$.  The difference between the
Taylor polynomial above and $t^mU(x)$ is therefore $o(t^m)$.  Since this
difference is a polynomial in $t$ of degree at most $m$, all its coefficients
vanish.  Comparing the coefficient of $t^m$ gives
\eqref{eq:homogeneous-taylor}.
\end{proof}

The function $H$ is not $C^2$.  Otherwise Lemma~\ref{lem:polynomiality} would
make it a quadratic form.  However,
\begin{align*}
    H(1,0)&=H(0,1)=1,\\
    H(1,1)&=2\e^{-5}, & H(1,-1)&=2\e^5,
\end{align*}
which violates the parallelogram identity for quadratic forms.

More generally, for every positive integer $k$,
\begin{equation}\label{eq:Hk}
    H_k(r,\theta)=r^k h_k(\theta),
    \qquad
    h_k(\theta)=\exp\!\left(-\frac{5k}{2}\sin2\theta\right),
\end{equation}
satisfies, away from the origin,
\[
    \Lie H_k=-kr^2H_k,
    \qquad
    H_k\in C^{k-1}(\R^2)\cap C^\infty(\R^2\setminus\{0\}).
\]
It is not $C^k$.  Indeed, Lemma~\ref{lem:polynomiality} would make it a
homogeneous polynomial, whose restriction to the unit circle is a
trigonometric polynomial.  The angular profile $h_k$ defined in
\eqref{eq:Hk} satisfies
\[
    h_k'=-5k\cos2\theta\,h_k.
\]
If $h_k$ were a trigonometric polynomial with largest Fourier mode $M$,
the right-hand side would have a nonzero mode $M+2$, whereas $h_k'$ would
not.  Thus $H_k\in C^{k-1}(\R^2)\setminus C^k(\R^2)$, so the regularity
conclusion of Lemma~\ref{lem:cone} is sharp.

The same distinction appears in the weighted-homogeneous converse theorem
of Rosier: arbitrary prescribed finite regularity is available, whereas a
globally smooth homogeneous Lyapunov function would be polynomial
\cite[Thm.~2 and Rem.~(b)]{rosier1992}.

\subsection{Smooth Versus Real-Analytic Lyapunov Functions}

The real-analytic obstruction does not rule out smooth Lyapunov functions.
Set
\[
    W(x,y)=
    \begin{cases}
        \exp(-1/H(x,y)),&(x,y)\neq(0,0),\\
        0,&(x,y)=(0,0).
    \end{cases}
\]
Writing $H(r,\theta)=r^2h(\theta)$, where $h$ is smooth and uniformly
positive, shows that every derivative of $W$ away from the origin is a
finite sum of terms bounded by
$Cr^{-M}\exp(-c/r^2)$ for some $C,c>0$ and $M\geq0$.  All such terms tend
to zero as $r\downarrow0$.  Thus $W$ extends to a flat $C^\infty$ function
at the origin \cite[Lem.~2.20]{lee2013}, and
\[
    \Lie W=H^{-2}\exp(-1/H)\Lie H<0
\]
away from the origin.

\begin{remark}[Earlier nonhomogeneous counterexamples]
Bacciotti and Rosier gave a globally asymptotically stable planar degree-five
polynomial vector field with no local real-analytic Lyapunov function for irrational
parameter values, and hence with irrational coefficients
\cite[Prop.~5.2]{bacciottiRosier2005}.  Ahmadi and El Khadir later gave a
globally asymptotically stable planar degree-seven vector field with rational
coefficients and the same non-existence property
\cite{ahmadiElKhadir2018local}.
Both vector fields are nonhomogeneous; the present example is a homogeneous cubic
vector field with integer coefficients.
\end{remark}

\subsection{An Explicit Rational Lyapunov Function}\label{sec:rational}

A rational converse is known for asymptotically stable homogeneous vector
fields \cite[Thm.~III.1]{ahmadiElKhadir2020}.  For the present system, a
strict rational certificate is completely explicit.

Let
\[
    A=x^2-xy+y^2,
    \qquad
    B=x^2+xy+y^2.
\]
Both quadratic forms are positive definite, and
\[
    \frac12(x^2+y^2)\leq A,B\leq\frac32(x^2+y^2).
\]
Define $R(0,0)=0$ and, for $(x,y)\neq(0,0)$, let
\begin{equation}\label{eq:R-def}
    R(x,y)=(x^2+y^2)\left(\frac{A}{B}\right)^5.
\end{equation}
The preceding bounds give
\begin{equation}\label{eq:R-bounds}
    3^{-5}(x^2+y^2)\leq R(x,y)\leq3^5(x^2+y^2).
\end{equation}
It is therefore positive definite and radially unbounded.  Its angular
profile is smooth, so Lemma~\ref{lem:cone} shows that its extension at the
origin is $C^1$.

Logarithmic differentiation along the vector field and polynomial
simplification give
\begin{equation}\label{eq:R-Lie}
    \frac{\Lie R}{R}
    =-2\frac{x^6+7x^4y^2-3x^2y^4+y^6}
        {(x^2-xy+y^2)(x^2+xy+y^2)}.
\end{equation}
The denominator is $AB$ and is positive away from the origin.  If $y\neq0$,
set $t=x^2/y^2\geq0$.  The numerator then becomes
\[
    y^6(t^3+7t^2-3t+1).
\]
The quadratic $7t^2-3t+1$ is strictly positive on $\R$ because its
discriminant is $-19$, and $t^3\geq0$.  The case $y=0$ is immediate.
Thus \eqref{eq:R-Lie} is strictly negative away from the origin, so $R$ is
a strict rational Lyapunov function.

\subsection{A Two-Parameter Family of Counterexamples}\label{sec:family}

For $a>0$ and $b\in\R$, consider
\begin{align}
    \dot x&=-a(x^2+y^2)x+b(x^2-y^2)x-(x^2+y^2)y,
        \label{eq:family-x}\\
    \dot y&=-a(x^2+y^2)y+b(x^2-y^2)y+(x^2+y^2)x.
        \label{eq:family-y}
\end{align}
Its polar form is
\begin{equation}\label{eq:family-polar}
    \dot r=r^3(-a+b\cos2\theta),
    \qquad
    \dot\theta=r^2.
\end{equation}
The function
\begin{equation}\label{eq:Hab}
    H_{a,b}(r,\theta)=r^2\e^{-b\sin2\theta}
\end{equation}
satisfies, away from the origin,
\[
    \mathcal{L}_{f_{a,b}}H_{a,b}=-2ar^2H_{a,b}<0.
\]
Lemma~\ref{lem:cone} and the bounds
\[
    \e^{-|b|}r^2\leq H_{a,b}\leq\e^{|b|}r^2
\]
show that $H_{a,b}$ is $C^1$, positive definite, and radially unbounded.
Together with the strict derivative identity, these properties imply by
Lyapunov's direct method that the origin is globally asymptotically stable
for every $a>0$.

Now suppose a positive definite homogeneous polynomial $P$ satisfies
$\mathcal{L}_{f_{a,b}}P\leq0$.  With $g$ as in
Lemma~\ref{lem:log-bound},
\[
    w(\theta)=a-b\cos2\theta-g(\theta)\geq0.
\]
Thus $\widehat w_0=a$, $\lvert\widehat w_2\rvert\leq a$, and
\[
    \widehat w_2=-\frac b2-\widehat g_2.
\]
Lemma~\ref{lem:log-bound} yields the necessary condition
\[
    \frac{|b|}{2}
        \leq\lvert\widehat w_2\rvert+\lvert\widehat g_2\rvert
        <a+1.
\]
We have proved the following family version.

\begin{theorem}[Family of counterexamples]\label{thm:family}
If
\[
    a>0,
    \qquad
    |b|\geq2(a+1),
\]
then \eqref{eq:family-x}--\eqref{eq:family-y} has a globally
asymptotically stable origin but admits no positive definite homogeneous
polynomial $P$ satisfying $\mathcal{L}_{f_{a,b}}P\leq0$.
\end{theorem}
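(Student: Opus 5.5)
The proof has two independent halves, stability and non-existence, and both reuse the machinery already set up for the main example.

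\emph{Stability.} First I will confirm the polar form \eqref{eq:family-polar} by computing $xf_1+yf_2$ and $xf_2-yf_1$ for \eqref{eq:family-x}--\eqref{eq:family-y} and inserting them into \eqref{eq:polar-general}. The rotational terms give $xf_2-yf_1=(x^2+y^2)^2$, so $\dot\theta=r^2$. The radial terms give $r^4(-a+b\cos2\theta)$, using $x^2-y^2=r^2\cos2\theta$. Logarithmic differentiation of $H_{a,b}=r^2\e^{-b\sin2\theta}$ then yields
\[
\dot H_{a,b}/H_{a,b}=2r^2(-a+b\cos2\theta)-2b\cos2\theta\,r^2=-2ar^2.
\]
Lemma~\ref{lem:cone} with $k=2$ gives $C^1$ regularity. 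The two-sided bounds $\e^{-|b|}r^2\le H_{a,b}\le\e^{|b|}r^2$ give positive definiteness and radial unboundedness. Lyapunov's direct method then yields global asymptotic stability whenever $a>0$.

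\emph{Non-existence.} I will follow Subsection~\ref{sec:obstruction} line by line. Suppose $P$ is positive definite, homogeneous and satisfies $\mathcal{L}_{f_{a,b}}P\le0$. Its degree is even, say $2N$, by the oddness argument. Writing $P=r^{2N}p(\theta)$, the analogue of \eqref{eq:LP-polar} is
\[
\mathcal{L}_{f_{a,b}}P=r^{2N+2}\bigl[p'+2N(-a+b\cos2\theta)p\bigr]\le0.
\]
Dividing by $2Np>0$ shows $w=a-b\cos2\theta-g\ge0$. Since $p>0$ is periodic, $\widehat g_0=0$, and hence $\widehat w_0=a$. Nonnegativity of $w$ gives $|\widehat w_2|\le\widehat w_0=a$. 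Because the second Fourier coefficient of $\cos2\theta$ is $1/2$, we have $\widehat w_2=-b/2-\widehat g_2$. Lemma~\ref{lem:log-bound} gives $|\widehat g_2|<1$. The triangle inequality therefore yields $|b|/2<a+1$, which contradicts $|b|\ge2(a+1)$.

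The only points needing care are that $\widehat g_0=0$ and $|\widehat w_2|\le\widehat w_0$ do not depend on the particular coefficients $1$ and $5$ of the main example. The key degree-independent estimate, $|\widehat g_2|<1$, holds for every $N$, so the argument goes through uniformly in the degree of $P$; no step presents a genuine obstacle.
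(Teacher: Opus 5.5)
Your proposal is correct and follows the paper's proof essentially step for step. Stability comes from the polar form, the identity $\dot H_{a,b}=-2ar^2H_{a,b}$, Lemma~\ref{lem:cone} and the two-sided bounds; non-existence comes from $w=a-b\cos2\theta-g\geq0$, $\lvert\widehat w_2\rvert\leq\widehat w_0=a$, $\widehat w_2=-b/2-\widehat g_2$ and Lemma~\ref{lem:log-bound}, which give $\lvert b\rvert/2<a+1$.
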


The original system \eqref{eq:field-x}--\eqref{eq:field-y} corresponds to $(a,b)=(1,5)$.  In particular, the strict
region $a>0$ and $|b|>2(a+1)$ is an open set of counterexamples within this
two-parameter family.

\subsection{Lean 4 Formalization}

A machine-checked Lean~4 formalization of all four items of
Theorem~\ref{thm:main} is available at
\url{https://github.com/j49liu/homogeneous-lyapunov-counterexample-lean}.
The development closely follows the proof architecture presented here while keeping selected modules reusable.

\section{Conclusion}

The planar cubic system \eqref{eq:field-x}--\eqref{eq:field-y} is globally
asymptotically stable and has two explicit strict homogeneous Lyapunov functions,
one involving an exponential and one rational.  Nevertheless,
the Fourier constraint in Subsection~\ref{sec:obstruction} rules out
homogeneous polynomial candidates of every degree, even under a weak Lie
derivative inequality.
The leading-term argument then excludes all local real-analytic Lyapunov
functions.  The mechanism persists on an open region
of the two-parameter family \eqref{eq:family-x}--\eqref{eq:family-y}.
These results give a negative answer to the homogeneous polynomial Lyapunov
converse conjecture posed in \cite{ahmadi2012difficulty,ahmadiParrilo2013}.

\section*{Acknowledgment}

The authors used \mbox{OpenAI} GPT-5.6-Sol Pro to discover the counterexample
and produce initial versions of the mathematical arguments in
Sections~II--IV.  \mbox{OpenAI} Codex (GPT-5.6-Sol Ultra) assisted with
drafting the manuscript and developing the Lean~4 formalization of the main
result.  The authors independently verified all AI-generated content and take
full responsibility for the article.

\bibliographystyle{IEEEtran}
\bibliography{references}


\end{document}